\newtheorem{theorem}{Theorem}[section]
\newtheorem*{theorem*}{Theorem}
\newtheorem*{claim*}{Claim}
\newtheorem{proposition}[theorem]{Proposition}
\newtheorem{question}[theorem]{Question}
\newtheorem{corollary}[theorem]{Corollary}
\newtheorem{fact}[theorem]{Fact}
\theoremstyle{definition}
\theoremstyle{remark}
\newtheorem{remark}[theorem]{Remark}
\numberwithin{equation}{section}
\newcommand{\eps}{\varepsilon}
\newcommand{\N}{\mathbb{N}}
\newcommand{\R}{\mathbb{R}}
\newcommand{\ol}{\overline}
\newcommand{\rstr}{\restriction}
\newcommand{\sm}{\setminus}
\DeclareMathOperator{\supp}{supp}
\newcommand{\seq}[2]{\big\langle#1\colon\ #2\big\rangle}
\newcommand{\seqn}[1]{\big\langle#1\colon\ n\io\big\rangle}
\newcommand{\finsub}[1]{\left[#1\right]^{<\omega}}
\newcommand{\io}{\in\mathbb{N}}
\newcommand{\fso}{\finsub{\omega}}
\begin{document}

\title[Complementability of $c_0$ in spaces $C(K\times L)$]{On complementability of $c_0$ in spaces $C(K\times L)$}
\author[J. K\k{a}kol]{Jerzy K\k{a}kol}
\address{Faculty of Mathematics and Computer Science, A. Mickiewicz University, Pozna\'n, Poland, and Institute of Mathematics, Czech Academy of Sciences, Prague, Czech Republic.}
\email{kakol@amu.edu.pl}
\author[D.\ Sobota]{Damian Sobota}
\address{Universit\"at Wien, Institut f\"ur Mathematik, Kurt G\"odel Research Center, Wien, Austria.}
\email{ein.damian.sobota@gmail.com}
\urladdr{www.logic.univie.ac.at/~{}dsobota}
\author[L. Zdomskyy]{Lyubomyr Zdomskyy}
\address{Universit\"at Wien, Institut f\"ur Mathematik, Kurt G\"odel Research Center, Wien, Austria.}
\email{lzdomsky@gmail.com}
\urladdr{www.logic.univie.ac.at/~{}lzdomsky}
\thanks{The research of the first  named author is supported by the GA\v{C}R project 20-22230L and RVO: 67985840. The second and third named authors were supported by the Austrian Science Fund FWF, Grants I 2374-N35, I 3709-N35, M 2500-N35, I 4570-N35.}
\subjclass[2020]
{Primary: 
46E15, 
28A33, 
46B09. 
Secondary: 
28C05, 
28C15, 
46E27. 
}

\keywords{Banach spaces of continuous functions, complementability, convergence of measures, weak* topology, Josefson--Nissenzweig theorem, Weak Law of Large Numbers}

\begin{abstract}
Using elementary probabilistic methods, in particular a variant of the Weak Law of Large Numbers related to the Bernoulli distribution, we prove that for every infinite compact spaces $K$ and $L$ the product $K\times L$ admits a sequence $\langle\mu_n\colon n\in\mathbb{N}\rangle$ of normalized signed measures with finite supports which converges to $0$ with respect to the weak* topology of the dual Banach space $C(K\times L)^*$. Our approach is completely constructive---the measures $\mu_n$ are defined by an explicit simple formula. We also show that this result generalizes the classical theorem of Cembranos and Freniche which states that for every infinite compact spaces $K$ and $L$ the Banach space $C(K\times L)$ contains a complemented copy of the space $c_0$.
\end{abstract}

\maketitle

\section{Introduction}

As usual, for a compact (Hausdorff) space $X$ we denote by $C(X)$ the Banach space of continuous real-valued functions on $X$ and by $c_0$ the Banach space of all real-valued sequences which converge to $0$, both endowed with the supremum norm.

It is an easy observation that for every infinite compact space $X$ the space $C(X)$ contains a closed linear subspace isomorphic to the space $c_0$. This subspace need not be complemented---e.g., for $X=\beta\N$, the \v{C}ech--Stone compactification of the set $\N$ of non-negative integers equipped with the discrete topology, the space $C(X)$ does not contain any complemented copies of $c_0$. However, if $X$ is a product of two infinite compact spaces, then $C(X)$ always contains a complemented copy of $c_0$, as was proved by Cembranos \cite{Cem84} and Freniche \cite{Fre84}.

\begin{theorem}[Cembranos--Freniche]\label{theorem:cemfre}
For every infinite compact spaces $K$ and $L$ the Banach space $C(K\times L)$ contains a complemented copy of the space $c_0$.
\end{theorem}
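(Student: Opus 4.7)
The plan follows the two-step strategy described in the abstract. First, construct an explicit weak* null sequence $\langle \mu_n\colon n\in\N\rangle$ of normalized, finitely supported signed measures on $K\times L$; then extract a complemented copy of $c_0$ from such a sequence. For the construction of the measures, pick pairwise distinct points $x_1,x_2,\ldots\in K$ and $y_1,y_2,\ldots\in L$, which exist because both spaces are infinite. For each $n$ I would try a measure supported on a rectangular grid,
$$\mu_n = \frac{1}{n^2}\sum_{i,j=1}^{n}\eps_{i,j}^{(n)}\,\delta_{(x_i,y_j)},$$
with Rademacher-type signs $\eps_{i,j}^{(n)}\in\{-1,+1\}$ balanced so that $\|\mu_n\|=1$. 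To see that $\int f\,d\mu_n\to 0$ for every $f\in C(K\times L)$, I would invoke a Weak Law of Large Numbers type estimate: if the signs were chosen independently and uniformly at random, then $\mathbb{E}\bigl|\int f\,d\mu_n\bigr|^2$ would be bounded by $\|f\|_\infty^2/n^2$, hence tend to $0$; the abstract indicates that the authors exhibit a single explicit sign pattern achieving this simultaneously for all continuous $f$, presumably via a derandomization exploiting the product (grid) structure.

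For the second step, given such a sequence $\langle\mu_n\rangle$, I would pass to a subsequence so that the finite supports $F_n=\supp\mu_n$ are pairwise disjoint (weak* convergence to $0$ rules out any atom retaining a non-vanishing mass in cofinally many $\mu_n$; a routine cleanup then yields the disjointness, possibly after discarding a small portion of mass). Using Urysohn's lemma on the pairwise disjoint finite sets $F_n$, I would build $h_n\in C(K\times L)$ with $\|h_n\|\le 1$ and $\int h_m\,d\mu_n=\delta_{n,m}$. A standard Bessaga--Pe\l czy\'nski argument then shows that $\overline{\spn}\{h_n : n\in\N\}$ is isomorphic to $c_0$, and the operator $f\mapsto \sum_n \mu_n(f)\,h_n$ is a bounded linear projection of $C(K\times L)$ onto it, proving the theorem.

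The main obstacle is clearly the first phase: producing a single deterministic sign pattern $\eps_{i,j}^{(n)}$ such that $\int f\,d\mu_n\to 0$ for \emph{every} $f\in C(K\times L)$, uniformly in $f$ and without any metrizability assumption on the factors. The rectangular-grid structure of $K\times L$ is what makes this feasible—individual infinite compact spaces need not admit any such sequence, as the $\beta\N$ counterexample in the introduction demonstrates, but in the product the two-dimensional cancellation between rows and columns can be exploited. The second phase is then a largely standard conversion of a weak* null sequence of finitely supported normalized measures into a complemented $c_0$, of the kind that has appeared in the literature in several guises.
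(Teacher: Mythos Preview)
Your second step is essentially correct and close in spirit to what the paper does, though the paper takes a slightly more indirect route: rather than building the projection by hand, it quotes the characterization from \cite{BKS19} to get a complemented copy of $(c_0)_p$ in $C_p(K\times L)$, and then uses the Closed Graph Theorem (Proposition~\ref{prop:c0p_c0}) to upgrade this to a complemented copy of $c_0$ in the Banach space $C(K\times L)$. Your direct Urysohn/biorthogonal construction would also work once the supports are disjoint, and is in fact what underlies \cite{BKS19}.

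The genuine gap is precisely where you flag it, and the paper's solution is not the $n\times n$ ansatz you propose. The authors do not search for a clever deterministic sign pattern on a square grid; instead they let the space absorb the randomness. The support of $\mu_n$ is the \emph{asymmetric} rectangle $\Omega_n\times\Sigma_n$ with $\Omega_n=\{-1,1\}^n$ (so $|\Omega_n|=2^n$) and $|\Sigma_n|=n$, and the weight at a point $(s,i)$ is $s(i)/(n\cdot 2^n)$. Thus every sign sequence $s\in\{-1,1\}^n$ appears as an actual point of the first factor, and the ``random'' sign in column $i$ is simply the $i$-th coordinate of that point. Averaging over the first factor is then literally taking expectation with respect to the uniform probability on $\{-1,1\}^n$, so the Weak Law of Large Numbers (in the quantitative form of Fact~\ref{fact:bollobas}) applies directly---no derandomization is needed. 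A further dividend is that the blocks $\Omega_n$ and $\Sigma_n$ are pairwise disjoint by construction, so the supports of the $\mu_n$ are disjoint from the start and your ``routine cleanup'' is unnecessary. All of this is carried out on $\beta\Omega\times\beta\Sigma\cong\beta\N\times\beta\N$ and then pushed forward to general $K\times L$ via continuous extensions of bijections onto countable discrete subsets.
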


A crucial step in both of the proofs is an application of the classical Josefson--Nissenzweig theorem which asserts that for every infinite-dimensional Banach space $E$ there exists a sequence $\seqn{x_n^*}$ in the dual space $E^*$ such that $\|x_n^*\|=1$ for every $n\io$ and $x_n^*(x)\to0$ for every $x\in E$ (that is, $\seqn{x_n^*}$ is convergent to $0$ with respect to the weak* topology). Since all standard proofs of the Josefson--Nissenzweig theorem are rather intricate and non-constructive, it is hard to deduce from the proofs of Cembranos and Freniche how the constructed complemented copy of $c_0$ in a given space $C(K\times L)$ basically looks like or what properties it has. In this paper we address this issue and provide an elementary and constructive proof of the following strengthening of Theorem \ref{theorem:cemfre}.

\begin{theorem}\label{theorem:main}
For every infinite compact spaces $K$ and $L$ there is a sequence $\seqn{\mu_n}$ of normalized signed measures on $K\times L$ with finite supports which is weak* convergent to $0$.
\end{theorem}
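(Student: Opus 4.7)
My plan is to reduce weak* convergence of $(\mu_n)$ in $C(K\times L)^*$ to convergence against elementary tensors $f\otimes g$ (using the Stone--Weierstrass density of $\mathrm{span}\{f\otimes g:f\in C(K),\,g\in C(L)\}$ in $C(K\times L)$), then to build $\mu_n$ explicitly as a uniformly weighted signed measure on a rectangular grid of points in $K\times L$, and finally to control the resulting bilinear form uniformly in $f,g$ by a Bernoulli--WLLN estimate.

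First, using that $K$ and $L$ are infinite compact, fix one-to-one sequences $\langle x_i:i\in\mathbb{N}\rangle\subseteq K$ and $\langle y_j:j\in\mathbb{N}\rangle\subseteq L$. For each $n$, select a sign matrix $E_n=(\varepsilon_{ij})_{i,j\le n}\in\{-1,+1\}^{n\times n}$ and set
\begin{equation*}
\mu_n \;=\; \frac{1}{n^2}\sum_{i,j=1}^{n}\varepsilon_{ij}\,\delta_{(x_i,y_j)}.
\end{equation*}
Then automatically $\|\mu_n\|=1$, and for $f\in C(K),\,g\in C(L)$,
\begin{equation*}
\mu_n(f\otimes g) \;=\; \frac{1}{n^2}\,\mathbf{f}^{\top}E_n\mathbf{g},\qquad \mathbf{f}=(f(x_i))_{i\le n},\ \mathbf{g}=(g(y_j))_{j\le n}.
\end{equation*}
This is where the Weak Law for Bernoulli signs enters: for i.i.d.\ Rademacher $\varepsilon_{ij}$ the right--hand side has mean $0$ and variance at most $\|f\|_\infty^2\|g\|_\infty^2/n^2$, so for each fixed pair $(f,g)$ a random sign choice gives $\mu_n(f\otimes g)\to 0$ in probability. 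The stronger statement I need, however, is a single $E_n$ that handles every $(f,g)$ simultaneously; this is delivered by requiring $\|E_n\|_{\mathrm{op}}=O(\sqrt n)$, because Cauchy--Schwarz then yields
\begin{equation*}
|\mu_n(f\otimes g)| \;\le\; \frac{\|E_n\|_{\mathrm{op}}}{n^2}\,\|\mathbf{f}\|_2\,\|\mathbf{g}\|_2 \;\le\; \frac{C\,\|f\|_\infty\|g\|_\infty}{\sqrt n} \;\xrightarrow[n\to\infty]{}\;0.
\end{equation*}
Such $E_n$ exist explicitly (Walsh--Hadamard matrices when $n=2^k$, for which $\|E_n\|_{\mathrm{op}}=\sqrt n$ exactly) or for general $n$ by a Bernoulli concentration argument; one then indexes the $\mu_n$ along the admissible $n$.

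The extension from tensor products to all of $C(K\times L)$ is then routine. Given $h\in C(K\times L)$ and $\eps>0$, Stone--Weierstrass produces $\tilde h=\sum_{k\le N}f_k\otimes g_k$ with $\|h-\tilde h\|_\infty<\eps$; since $\|\mu_n\|=1$ one has $|\mu_n(h)|\le|\mu_n(\tilde h)|+\eps$, and $|\mu_n(\tilde h)|\to 0$ by the tensor estimate, so $\limsup_n|\mu_n(h)|\le\eps$.

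The main obstacle I anticipate is precisely the uniformity step: since $C(K)$ and $C(L)$ may be nonseparable, a scalar WLLN applied function-by-function does not amalgamate into a single sequence of measures (there is no countable dense family to Borel--Cantelli against). What rescues the argument, and presumably motivates the word \emph{variant} in the abstract, is that the Bernoulli machinery must be applied at the \emph{matrix} level to the sign pattern $E_n$ itself, yielding a single $\mu_n$ per $n$ that simultaneously annihilates every elementary tensor and hence, after the Stone--Weierstrass reduction, every continuous function on $K\times L$.
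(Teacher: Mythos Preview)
Your proof is correct, and it takes a genuinely different route from the paper.

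The paper does not work directly on $K\times L$. Instead it first builds explicit measures on $\beta\Omega\times\beta\Sigma$ (with $\Omega=\bigcup_n\{-1,1\}^n$ and $\Sigma=\bigcup_n n$), namely $\mu_n=\sum_{s\in\{-1,1\}^n,\,i<n}\frac{s(i)}{n2^n}\delta_{(s,i)}$, and checks weak* convergence there by testing against characteristic functions of clopen rectangles $[A]\times[B]$ (using total disconnectedness rather than tensor density). The estimate on $|\mu_n([A]\times[B])|$ is obtained not via an operator-norm bound but by splitting the sign patterns $s$ into ``balanced on $B$'' and ``unbalanced on $B$'' and invoking a Bollob\'as-type WLLN tail bound for Bernoulli sums. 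Only afterwards is the result transported to a general $K\times L$ via continuous surjections $\beta\N\to K$, $\beta\N\to L$ onto the closures of countable discrete subsets.

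Your argument is shorter and more linear-algebraic: you skip the $\beta\N$ detour entirely, reduce via Stone--Weierstrass to elementary tensors $f\otimes g$, and replace the probabilistic tail estimate by the single inequality $|\mathbf f^{\top}E_n\mathbf g|\le\|E_n\|_{\mathrm{op}}\|\mathbf f\|_2\|\mathbf g\|_2$ together with the explicit Walsh--Hadamard choice $\|E_n\|_{\mathrm{op}}=\sqrt n$. It is worth noting that the paper's $2^n\times n$ sign array $M_{s,i}=s(i)$ is itself a matrix with orthogonal columns and $\|M\|_{\mathrm{op}}=2^{n/2}$, so your operator-norm argument would in fact also dispatch their construction without the WLLN split; conversely, the paper's approach buys a single uniform formula valid for every $n$, while you must pass to the subsequence $n=2^k$ (which is of course harmless for the statement).
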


Note that, by the virtue of the classical Riesz representation theorem, for every infinite compact space $X$, the Josefson--Nissenzweig theorem can be expressed in the following way: there is a sequence $\seqn{\mu_n}$ of normalized signed regular Borel measures on $X$ which converges to $0$ with respect to the weak* topology of the dual space $C(X)^*$. Hence Theorem \ref{theorem:main} might be treated in the first place as a special case of the Josefson--Nissenzweig theorem. However, as said above, in contrast to the latter general result, our proof of Theorem \ref{theorem:main} is completely constructive---for every compact spaces $K$ and $L$ the measures $\mu_n$ are given by an explicit simple formula. To prove that they weak* converge to $0$ we use elementary tools from probability theory, in particular a variant of the Weak Law of Large Numbers related to the Bernoulli distribution.

The Cembranos--Freniche theorem can be deduced from Theorem \ref{theorem:main} in the following way, using some $C_p$-theory. Recall that if $X$ is a Tychonoff space, then $C_p(X)$ denotes the space of all continuous real-valued functions on $X$ equipped with the pointwise topology. Similarly, by $(c_0)_p$ we mean the space $c_0$ but endowed with the topology inherited from the product $\R^\N$. The next corollary is an immediate consequence of Theorem \ref{theorem:main} and the following result due to Banakh, K\k{a}kol, and \'{S}liwa \cite[Theorem 1]{BKS19}: a Tychonoff space $X$ admits a sequence $\seqn{\mu_n}$ of finitely supported signed measures such that $\big\|\mu_n\big\|=1$ for every $n\io$ and $\int_Xfd\mu_n\to0$ for every $f\in C_p(X)$ if and only if the space $C_p(X)$ contains a complemented copy of the space $(c_0)_p$. Of course, if $X$ is compact, then the condition that $\int_Xfd\mu_n\to0$ for every $f\in C_p(X)$ is equivalent to $\seqn{\mu_n}$ being convergent to $0$ with respect to the weak* topology of $C(X)^*$.

\begin{corollary}\label{cor:main}
For every infinite compact spaces $K$ and $L$ the space $C_p(K\times L)$ contains a complemented copy of the space $(c_0)_p$.
\end{corollary}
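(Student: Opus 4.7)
The plan is to derive Corollary \ref{cor:main} as a direct combination of Theorem \ref{theorem:main} with the characterization from \cite{BKS19} cited in the paragraph above. First I would apply Theorem \ref{theorem:main} to the compact space $X := K\times L$ to obtain a sequence $\seqn{\mu_n}$ of finitely supported signed measures on $X$ with $\|\mu_n\|=1$ for every $n\io$ that is weak* convergent to $0$ in $C(X)^*$. Then I would unpack what weak* convergence means in this setting: by the Riesz representation theorem, the statement that $\mu_n \to 0$ in the weak* topology of $C(X)^*$ is literally the statement that $\int_X f\, d\mu_n \to 0$ for every $f\in C(X)$. Since $X=K\times L$ is compact, the underlying set of $C_p(X)$ equals $C(X)$, so this condition coincides with the hypothesis $\int_X f\, d\mu_n \to 0$ for every $f\in C_p(X)$ required by the Banakh--K\k{a}kol--\'Sliwa characterization.

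Having verified both hypotheses of \cite[Theorem~1]{BKS19} (finite supports and unit norm on the measures, plus convergence of the integrals against every continuous function), I would invoke that theorem to conclude immediately that $C_p(K\times L)$ contains a complemented copy of $(c_0)_p$. There is no real obstacle here, since the whole content has been done: Theorem \ref{theorem:main} is the new input and BKS19 is the black-box bridge from measures to complemented copies in the pointwise topology. The only point warranting an explicit sentence is the trivial remark above about equivalence of the two forms of ``$\int f\, d\mu_n\to 0$'' on a compact space, which justifies using the weak* statement of Theorem \ref{theorem:main} to feed into the $C_p$-theoretic hypothesis of BKS19.
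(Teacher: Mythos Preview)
Your proposal is correct and matches the paper's own argument exactly: the paper also derives the corollary immediately from Theorem~\ref{theorem:main} together with \cite[Theorem~1]{BKS19}, noting (as you do) that on a compact $X$ weak* convergence to $0$ in $C(X)^*$ is the same as $\int_X f\,d\mu_n\to 0$ for all $f\in C_p(X)$. Nothing further is needed.
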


\noindent With an aid of the Closed Graph Theorem, the result of Cembranos and Freniche can be easily deduced from the above corollary, see Proposition \ref{prop:c0p_c0} for more details.

Theorem \ref{theorem:main} gives also applications to Grothendieck $C(X)$-spaces. Recall that a Banach space $E$ is \textit{Grothendieck} if every sequence $\seqn{x_n^*}$ in the dual space $E^*$ which is weak* convergent to $0$, converges also weakly. It was proved by Cembranos \cite[Corollary 2]{Cem84} (cf. also \cite[Proposition 5.3]{Sch82}) that, for every infinite compact space $X$, the Banach space $C(X)$ is Grothendieck if and only if $C(X)$ does not contain any complemented copy of $c_0$. The latter result and Theorem \ref{theorem:cemfre} immediately yield the following theorem of Khurana \cite[Theorem 2]{Khu78}: for every infinite compact spaces $K$ and $L$ the space $C(K\times L)$ is never Grothendieck, that is, there exists a sequence $\seqn{\mu_n}$ of normalized signed regular Borel measures on $K\times L$ which converges weak* to $0$ but not weakly. Theorem \ref{theorem:main} strengthens this observation---there must even exist such $\seqn{\mu_n}$ which consists only of finitely supported measures (note that, by the Schur property, such $\seqn{\mu_n}$ cannot be weakly convergent to $0$). This yields indeed a strengthening of Khurana's theorem as there exists a compact space $X$ such that $C(X)$ is not Grothendieck but for every separable closed subspace $Y$ of $X$ the space $C(Y)$ is Grothendieck (see e.g. \cite[Example 10]{KM20}).

Let us finish by mentioning the following problem. 
It is well-known that the space $C(\beta\N)$ is \textit{prime}, that is, every complemented infinite-dimensional subspace of $C(\beta\N)$ is isomorphic to $C(\beta\N)$ (see \cite{Lin67}). On the other hand, $C(\beta\N\times\beta\N)$ is obviously not prime, we do not know however whether there exist complemented subspaces different than the ones listed below (cf. \cite{Can21}).

\begin{question}
Does there exist a complemented infinite-dimensional subspace of $C(\beta\N\times\beta\N)$ which is not isomorphic to any of the following spaces: $C(\beta\N\times\beta\N)$, $C(\beta\N)$, $c_0$, $c_0\oplus C(\beta\N)$, and $c_0(C(\beta\N))$?
\end{question}

\section{Proof of Theorem \ref{theorem:main}}




Let us start with some notation and terminology. All topological spaces considered by us are assumed to be Tychonoff. For a Tychonoff space $X$, by $Bor(X)$ we denote the Borel $\sigma$-field of $X$. A function $\mu\colon Bor(X)\to\R$ is \textit{a measure} on $X$, if $\mu$ is additive, inner regular with respect to closed sets and outer regular with respect to open sets, and has bounded variation (that is, $\|\mu\|=\sup\big\{|\mu(A)|+|\mu(B)|\colon A,B\in Bor(X), A\cap B=\emptyset\big\}<\infty$). If $\mu$ attains negative values, then we say that $\mu$ is \textit{signed}. If $\|\mu\|=1$, then we say that $\mu$ is \textit{normalized}. \textit{The support} of $\mu$ is denoted by $\supp(\mu)$. If there are finite sequences $x_1,\ldots,x_n\in X$ (with all points pairwise distinct) and $\alpha_1,\ldots,\alpha_n\in\R$ such that $\mu=\sum_{i=1}^n\alpha_i\delta_{x_i}$, where by $\delta_x$ we denote the one-point measure at a point $x\in X$, then we say that $\mu$ is \textit{finitely supported}. Note that in this case we have $\supp(\mu)=\big\{x_1,\ldots,x_n\big\}$ and $\|\mu\|=\sum_{i=1}^n\big|\alpha_i\big|$. We assume that $0\in\N$ and by $\N_+$ we denote the set of positive integers, i.e. $\N_+=\N\sm\{0\}=\{1,2,3,\ldots\}$. We identify every $n\in\N$ with the set $\{0,\ldots,n-1\}$. If $A$ is a set, then by $|A|$ we denote its cardinality.

\medskip

For every $n\io_+$
put $\Omega_n=\{-1,1\}^n$ and $\Sigma_n=n\times\{n\}$ (so
$\big|\Omega_n\big|=2^n$ and $\big|\Sigma_n\big|=n$). To simplify
the notation, we will usually write $i\in\Sigma_n$ instead of
$(i,n)\in\Sigma_n$---this should cause no confusion. Put also
$\Omega=\bigcup_{n\io_+}\Omega_n$ and
$\Sigma=\bigcup_{n\io_+}\Sigma_n$, and endow these two sets with the
discrete topology. This way, we can think of the product space
$\Omega\times\Sigma$ as a countable union of pairwise disjoint
discrete rectangles $\Omega_k\times\Sigma_m$ of size $m2^k$---the
rectangles $\Omega_n\times\Sigma_n$, lying along the diagonal, will
bear a special meaning, namely, they will be the supports of
measures from the special sequence $\seq{\mu_n}{n\io_+}$ on the space
$\beta\Omega\times\beta\Sigma$ (that is, on the product of the \v{C}ech--Stone compactifications of $\Omega$ and $\Sigma$) defined as follows ($n\io_+$):
\[\mu_n=\sum_{\substack{s\in\Omega_n\\i\in\Sigma_n}}\frac{s(i)}{n2^n}\delta_{(s,i)}.\]
Then, $\supp\big(\mu_n\big)=\Omega_n\times\Sigma_n$, so $\big|\supp\big(\mu_n\big)\big|=n2^n$,
$\big\|\mu_n\big\|=1$, and
\[\pi_i\big[\supp\big(\mu_n\big)\big]\cap\pi_i\big[\supp\big(\mu_{n'}\big)\big]=\emptyset\]
for every $n\neq n'$ and $i\in\{0,1\}$ (here $\pi_i$ denotes the projection on the $i$-th coordinate). Note that for each $n\io_+$ and any two sets $A\in\wp(\Omega)$ and $B\in\wp(\Sigma)$ we have:
\[\tag{$\dagger$}\big|\mu_n\big([A]\times[B]\big)\big|\le\frac{\big|A\cap \Omega_n\big|}{2^n}\cdot\frac{\big|B\cap \Sigma_n\big|}{n},\]
where $[A]$ and $[B]$ always denote the clopen subsets of $\beta\Omega$ and $\beta\Sigma$ corresponding in the sense of the Stone duality to $A$ and $B$, respectively---since $\beta\Omega$ and $\beta\Sigma$ are extremely disconnected, we have $[A]=\ol{A}^{\beta\Omega}$ and $[B]=\ol{B}^{\beta\Sigma}$.

\medskip

In the next proposition we will prove that the sequence $\seqn{\mu_n}$, as defined above, is weak* convergent to $0$ on the space $\beta\Omega\times\beta\Sigma$. However, before we do that, we need to provide a bit of explanation of probability tools we use in the proof. For every $n\io_+$ and $i\in n$ define the function $X_i\colon\Omega_n\to\{0,1\}$ as follows: $X_i(r)=1$ if and only if $r(i)=1$, where $r\in\Omega_n$. Put $S_n=\sum_{i=0}^{n-1}X_i$, so $S_n\colon\Omega_n\to n$ is the function computing the number of $1$'s in the argument sequence $r\in\Omega_n$. For a finite set $A\in\fso$, let $P_A$ denotes the standard product probability on $\{-1,1\}^A$ (assigning $1/2^{|A|}$ to each elementary event, i.e. $P_A(\{r\})=1/2^{|A|}$ for each $r\in\{-1,1\}^{|A|}$). Recall that for every $k\le n$ it holds:
\[P_n(S_n=k)=P_n\big(\big\{r\in\Omega_n\colon\ S_n(r)=k\big\}\big)={n\choose k}1/2^n.\]
We will need the following fact, being a variant of the Weak Law of Large Numbers, which estimates the probability that $S_n(r)$ has value ``far'' (with respect to $\eps$) from $n/2$, i.e. that ``$r$ contains \emph{significantly} more (with respect to $\eps$) $1$'s than $-1$'s, or \textit{vice versa}''.

\begin{fact}\label{fact:bollobas}
If $n\io_+$ and $\eps\in\big(0,1/12\big]$ are such numbers that $n\ge48/\eps$, then:
\[P_n\big(\big|S_n-n/2\big|\ge\eps n/2\big)\le
\frac{\sqrt{2}}{\eps\sqrt{n}}.\]
\end{fact}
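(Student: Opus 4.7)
The natural route is Chebyshev's inequality applied to $S_n$. First I would observe that under $P_n$ the indicator functions $X_0,\dots,X_{n-1}$ are mutually independent $\{0,1\}$-valued variables with mean $1/2$ and variance $1/4$, so that $E[S_n]=n/2$ and $\mathrm{Var}(S_n)=n/4$. Chebyshev's inequality then immediately yields
\[
P_n\bigl(|S_n-n/2|\geq\varepsilon n/2\bigr)\leq\frac{\mathrm{Var}(S_n)}{(\varepsilon n/2)^2}=\frac{1}{\varepsilon^2 n}.
\]

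It remains to compare the Chebyshev bound $1/(\varepsilon^2 n)$, together with the trivial bound $P_n(\cdot)\leq 1$, against the claimed $\sqrt{2}/(\varepsilon\sqrt{n})$. A brief algebraic manipulation shows that $1/(\varepsilon^2 n)\leq\sqrt{2}/(\varepsilon\sqrt{n})$ is equivalent to $\varepsilon^2 n\geq 1/2$, while $1\leq\sqrt{2}/(\varepsilon\sqrt{n})$ is equivalent to $\varepsilon^2 n\leq 2$. These two ranges together cover all positive values of $\varepsilon^2 n$, so in every case one of the two estimates is at most $\sqrt{2}/(\varepsilon\sqrt{n})$, and the conclusion follows.

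The hypotheses $\varepsilon\leq 1/12$ and $n\geq 48/\varepsilon$ are in fact not strictly required for the inequality itself to hold; they appear to be included to ensure that the resulting bound satisfies $\sqrt{2}/(\varepsilon\sqrt{n})\leq 1/\sqrt{24\varepsilon}$, which is a convenient form for the subsequent weak* convergence argument for $\seqn{\mu_n}$ via the estimate $(\dagger)$. Accordingly, I foresee no real obstacle: this is essentially a textbook second-moment calculation followed by elementary bookkeeping between the two equivalent presentations of the bound.
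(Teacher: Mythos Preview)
Your argument is correct. The Chebyshev bound $1/(\varepsilon^2 n)$ together with the trivial bound $1$ indeed covers all cases, so the inequality $P_n(|S_n-n/2|\geq\varepsilon n/2)\leq\sqrt{2}/(\varepsilon\sqrt{n})$ holds for every $n\in\N_+$ and every $\varepsilon>0$, with no restriction on either parameter.

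The paper takes a different route: it gives no argument at all and simply cites Bollob\'as, \emph{Random Graphs}, Theorem~1.7(i). That theorem obtains the same bound via Stirling-type estimates on binomial tails; those estimates are sharper in principle but come with the side conditions $\varepsilon\leq 1/12$ and $n\geq 48/\varepsilon$, which is why those hypotheses appear in the statement. Your self-contained second-moment proof is more elementary and, as you observed, renders those hypotheses unnecessary for the inequality itself. Your speculation about the role of the hypotheses is therefore slightly off the mark: they are inherited from the cited reference rather than introduced for the downstream application (although the paper does later use $|B\cap\Sigma_n|\geq 2/\varepsilon^4\geq 48/\varepsilon$ precisely so that the citation applies).
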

\begin{proof}
See Bollob\'as \cite[Theorem 1.7.(i)]{Bol01}.
\end{proof}

\medskip

We are ready to prove the aforementioned auxiliary proposition.

\begin{proposition}\label{prop:fsjnp_product_omega_sigma}
The sequence $\seq{\mu_n}{n\io_+}$ defined above is convergent to $0$ with respect to the weak* topology of the dual space $C(\beta\Omega\times\beta\Sigma)^*$.
\end{proposition}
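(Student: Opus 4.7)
My plan is to first reduce the claim to pointwise convergence of $\mu_n$ on indicator functions of clopen rectangles, and then to bound $|\mu_n([A]\times[B])|$ by separating the sum over $s\in A\cap\Omega_n$ into ``balanced'' and ``unbalanced'' sequences, using Fact~\ref{fact:bollobas} to control the count of the unbalanced ones. Because $\beta\Omega$ and $\beta\Sigma$ are extremally disconnected, the linear span of indicators $\mathbf{1}_{[A]\times[B]}$ (with $A\sub\Omega$, $B\sub\Sigma$) forms a unital point-separating subalgebra of $C(\beta\Omega\times\beta\Sigma)$, uniformly dense by Stone--Weierstrass. Combined with the uniform bound $\|\mu_n\|=1$, this reduces the statement to showing $\mu_n([A]\times[B])\to 0$ for every $A\sub\Omega$ and $B\sub\Sigma$.

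Fix such $A,B$ and write $a_n=|A\cap\Omega_n|$, $b_n=|B\cap\Sigma_n|$, and $W(s)=\sum_{i\in B\cap\Sigma_n}s(i)$, so that $\mu_n([A]\times[B]) = \frac{1}{n 2^n}\sum_{s\in A\cap\Omega_n}W(s)$. Given $\eps'>0$, I fix an auxiliary $\eps\in(0,1/12]$ (to be optimized at the end). If $b_n<48/\eps$, then the triangle inequality $(\dagger)$ already gives $|\mu_n([A]\times[B])|\le b_n/n<48/(\eps n)$, which tends to $0$. If $b_n\ge 48/\eps$, I call $s\in\Omega_n$ \emph{balanced} when $|W(s)|<\eps b_n$ and \emph{unbalanced} otherwise; since the marginal of the uniform measure on $\Omega_n$ onto the $(B\cap\Sigma_n)$-coordinates is again uniform on $\{-1,1\}^{b_n}$ and $|S_{b_n}-b_n/2|=\tfrac{1}{2}|W(s)|$, Fact~\ref{fact:bollobas} applied at scale $b_n$ bounds the number of unbalanced $s$ in $\Omega_n$ by $2^n\sqrt{2}/(\eps\sqrt{b_n})$. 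The balanced $s$ then contribute at most $\eps b_n a_n/(n 2^n)\le\eps$ to $|\mu_n([A]\times[B])|$ and the unbalanced ones contribute at most $\sqrt{2}/(\eps\sqrt n)$ (using $b_n\le n$), yielding $|\mu_n([A]\times[B])|\le\eps+\sqrt{2}/(\eps\sqrt n)$; choosing $\eps:=\eps'/2$ first and then taking $n$ large enough handles both regimes simultaneously.

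The crux of the argument is the realization that the trivial bound $(\dagger)$ is, by itself, useless---in the worst case $a_n\approx 2^n$ and $b_n\approx n$ it only yields order $1$---so one must exploit the cancellation of the $\pm 1$ values $s(i)$ inside $W(s)$. Fact~\ref{fact:bollobas} is precisely what quantifies this cancellation, saying that a uniformly random $s\in\Omega_n$ is balanced on any fixed large coordinate subset with high probability, which is the $\sqrt{b_n}$ improvement we need. The only other subtlety is the order of quantifiers---$\eps$ must be fixed before $n$, because it multiplies the ratio $a_n b_n/(n 2^n)\le 1$ appearing in the balanced estimate---after which everything else reduces to straightforward bookkeeping.
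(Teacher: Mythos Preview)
Your proof is correct and follows essentially the same route as the paper's: reduce to clopen rectangles via Stone--Weierstrass, split into small-$b_n$ and large-$b_n$ regimes, and in the latter decompose into balanced and unbalanced $s$ using Fact~\ref{fact:bollobas}. The only cosmetic difference is that you take the threshold $b_n\ge 48/\eps$ (just enough to invoke the fact) and bound the unbalanced contribution by $\sqrt{2}/(\eps\sqrt{n})\to 0$, whereas the paper uses the larger threshold $b_n\ge 2/\eps^4$ so that its unbalanced bound $\sqrt{2}/(\eps\sqrt{b_n})$ is already $\le\eps$; your bookkeeping is slightly leaner but the idea is identical.
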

\begin{proof}
Since $\beta\Omega\times\beta\Sigma$ is a totally disconnected compact space, to prove that $\seq{\mu_n}{n\io_+}$ is weak* convergent to $0$ it is enough to show that it converges to $0$ on every clopen subset of the form $[A]\times[B]$, where $A\in\wp(\Omega)$ and $B\in\wp(\Sigma)$ (to see this, use e.g. the Stone--Weierstrass theorem applied to the linear subspace of simple functions defined on clopen subsets of $\beta\Omega\times\beta\Sigma$). So let us fix two such sets $A$ and $B$.

Fix $\eps\in\big(0,1/12\big]$ and put:
\[I_0=\big\{n\io_+\colon\ \big|B\cap\Sigma_n\big|<2/\eps^4\big\}\]
and
\[I_1=\N_+\sm I_0=\big\{n\io_+\colon \big|B\cap\Sigma_n\big|\ge2/\eps^4\big\}.\]
For each $i\in\{0,1\}$ we will find $N_i\io_+$ such that for every $n\ge N_i$, $n\in I_i$, it holds
\[\big|\mu_n\big([A]\times[B]\big)\big|=\big|\mu_n\big(\big[A\cap\Omega_n\big]\times\big[B\cap\Sigma_n\big]\big)\big|<2\eps.\]

\medskip

We first look for $N_0$. By ($\dagger$) for every $n\in I_0$ we have:
\[\big|\mu_n\big([A]\times[B]\big)\big|\le\frac{\big|A\cap \Omega_n\big|}{2^n}\cdot\frac{\big|B\cap\Sigma_n\big|}{n}\le\frac{\big|B\cap\Sigma_n\big|}{n}<\frac{2}{n\eps^4},\]
so if $I_0$ is infinite, then there exists $N_0\io_+$ such that for every $n\ge N_0$, $n\in I_0$, we have:
\[\big|\mu_n\big([A]\times[B]\big)\big|<2\eps.\]
If, on the other hand, $I_0$ is finite, then simply set $N_0=1+\max I_0$.

\medskip

Let us now look for $N_1$. We assume that $I_1$ is non-empty---otherwise simply set $N_1=0$.  For every $n\in I_1$ define the set $\Delta_{n,\eps}$ as follows:
\[\Delta_{n,\eps}=\Big\{s\in\Omega_n\colon\ \Big|\big|\big\{i\in B\cap\Sigma_n\colon\ s(i)=1\big\}\big|-\frac{\big|B\cap\Sigma_n\big|}{2}\Big|\ge\eps\frac{\big|B\cap\Sigma_n\big|}{2}\Big\},\]
so $\Delta_{n,\eps}$ denotes the event that $s\in\Omega_n$ is ``far'' (with respect to $\eps$) from having the same numbers of $1$'s and $-1$'s when restricted to the set $B$. If we put similarly:
\[\Gamma_{n,\eps}=\Big\{s\in\{-1,1\}^{B\cap\Sigma_n}\colon\ \Big|\big|\big\{i\in B\cap\Sigma_n\colon\ s(i)=1\big\}\big|-\frac{\big|B\cap\Sigma_n\big|}{2}\Big|\ge\eps\frac{\big|B\cap\Sigma_n\big|}{2}\Big\},\]
then we trivially have:
\[\tag{$\times$}\Delta_{n,\eps}=\Gamma_{n,\eps}\times\{-1,1\}^{\Sigma_n\sm B}.\]
Using this, for every $n\in I_1$ we will estimate the following values (see (1) and (3)):
\[\tag{$*$}\big|\mu_n\big(\big[A\cap\Delta_{n,\eps}\big]\times[B\cap\Sigma_n]\big)\big|=\Big|\sum_{\substack{s\in A\cap\Delta_{n,\eps}\\i\in B\cap\Sigma_n}}\frac{s(i)}{n2^n}\Big|\]
and
\[\tag{$**$}\big|\mu_n\big(\big[A\cap\big(\Omega_n\sm\Delta_{n,\eps}\big)\big]\times[B\cap\Sigma_n]\big)\big|=\Big|\sum_{\substack{s\in A\cap(\Omega_n\sm\Delta_{n,\eps})\\i\in B\cap\Sigma_n}}\frac{s(i)}{n2^n}\Big|.\]
Note that:
\[\big|\mu_n\big([A]\times[B]\big)\big|\le\big|\mu_n\big(\big[A\cap\Delta_{n,\eps}\big]\times[B\cap\Sigma_n]\big)\big|+\big|\mu_n\big(\big[A\cap\big(\Omega_n\sm\Delta_{n,\eps}\big)\big]\times[B\cap\Sigma_n]\big)\big|,\]
so obtaining ``good'' estimations of ($*$) and ($**$) will finish the proof.

\medskip

Fix $n\in I_1$ and let us start with the estimation of ($*$). Note that $\big|B\cap\Sigma_n\big|\ge48/\eps$, so recall ($\times$) and apply Fact \ref{fact:bollobas} with the set $B\cap\Sigma_n$ instead of the set $n=\{0,\ldots,n-1\}$ to get that: 
\[\tag{$0$}P_n\big(\Delta_{n,\eps}\big)=P_{B\cap\Sigma_n}\big(\Gamma_{n,\eps}\big)\cdot P_{\Sigma_n\sm B}\Big(\{-1,1\}^{\Sigma_n\sm B}\Big)=P_{B\cap\Sigma_n}\big(\Gamma_{n,\eps}\big)\le\frac{\sqrt{2}}{\eps\sqrt{\big|B\cap\Sigma_n\big|}}.\]
It holds that:
\[\Big|\sum_{\substack{s\in A\cap\Delta_{n,\eps}\\i\in B\cap\Sigma_n}}\frac{s(i)}{n2^n}\Big|\le\sum_{\substack{s\in A\cap\Delta_{n,\eps}\\i\in B\cap\Sigma_n}}\frac{1}{n2^n}=\frac{\big|A\cap\Delta_{n,\eps}\big|\cdot\big|B\cap\Sigma_n\big|}{n2^n}\le\]
\[\le\frac{\big|\Delta_{n,\eps}\big|\cdot n}{n2^n}=P_n\big(\Delta_{n,\eps}\big),\]
so by (0) we get the following estimation of ($*$):
\[\tag{$1$}\Big|\sum_{\substack{s\in A\cap\Delta_{n,\eps}\\i\in B\cap\Sigma_n}}\frac{s(i)}{n2^n}\Big|\le\frac{\sqrt{2}}{\eps\sqrt{\big|B\cap\Sigma_n\big|}}.\]

\medskip

We now estimate ($**$). For every $s\in\Omega_n\sm \Delta_{n,\eps}$ we have:
\[\big|\sum_{i\in B\cap\Sigma_n}s(i)\big|=\Big|\big|\big\{i\in B\cap\Sigma_n\colon\ s(i)=1\big\}\big|-\big|\big\{i\in B\cap\Sigma_n\colon\ s(i)=-1\big\}\big|\Big|\le\]
\[\le\Big|\big|\big\{i\in B\cap\Sigma_n\colon\ s(i)=1\big\}\big|-\frac{\big|B\cap\Sigma_n\big|}{2}\Big|+\Big|\big|\big\{i\in B\cap\Sigma_n\colon\ s(i)=-1\big\}\big|-\frac{\big|B\cap\Sigma_n\big|}{2}\Big|<\]
\[<2\cdot\eps\frac{\big|B\cap\Sigma_n\big|}{2}=\eps\big|B\cap\Sigma_n\big|,\]
so:
\[\tag{$2$}\big|\sum_{i\in B\cap\Sigma_n}s(i)\big|<\eps\big|B\cap\Sigma_n\big|.\]
Next, it holds:
\[\Big|\sum_{\substack{s\in A\cap(\Omega_n\sm\Delta_{n,\eps})\\i\in B\cap\Sigma_n}}\frac{s(i)}{n2^n}\Big|\le\frac{1}{n2^n}\Big|\sum_{\substack{s\in A\cap(\Omega_n\sm\Delta_{n,\eps})\\i\in B\cap\Sigma_n}}s(i)\Big|=\]
\[=\frac{1}{n2^n}\Big|\sum_{s\in A\cap(\Omega_n\sm\Delta_{n,\eps})}\ \sum_{i\in B\cap\Sigma_n}s(i)\Big|\le\frac{1}{n2^n}\sum_{s\in A\cap(\Omega_n\sm\Delta_{n,\eps})}\big|\sum_{i\in B\cap\Sigma_n}s(i)\big|\le\]
\[\le\frac{1}{n}\max\Big\{\big|\sum_{i\in B\cap\Sigma_n}s(i)\big|\colon\ s\in\Omega_n\sm\Delta_{n,\eps}\Big\},\]
so by (2):
\[\tag{$3$}\Big|\sum_{\substack{s\in A\cap(\Omega_n\sm\Delta_{n,\eps})\\i\in B\cap\Sigma_n}}\frac{s(i)}{n2^n}\Big|<\frac{\eps\big|B\cap\Sigma_n\big|}{n}\le\frac{\eps n}{n}=\eps.\]

\medskip

Using (1), (3), and the fact that $\big|B\cap\Sigma_n\big|\ge2/\eps^4$, we conclude that for every $n\in I_1$ we have:
\[\big|\mu_n\big([A]\times[B]\big)\big|=\Big|\sum_{\substack{s\in A\cap\Omega_n\\i\in B\cap\Sigma_n}}\frac{s(i)}{n2^n}\Big|\le\Big|\sum_{\substack{s\in A\cap\Delta_{n,\eps}\\i\in B\cap\Sigma_n}}\frac{s(i)}{n2^n}\Big|+\Big|\sum_{\substack{s\in A\cap(\Omega_n\sm\Delta_{n,\eps})\\i\in B\cap\Sigma_n}}\frac{s(i)}{n2^n}\Big|<\]
\[<\frac{\sqrt{2}}{\eps\sqrt{\big|B\cap\Sigma_n\big|}}+\eps\le\eps+\eps=2\eps.\]
It follows that if for $N_1$ we take any number from $I_1$, e.g. we set $N_1=\min I_1$, then for every $n\in I_1$, $n\ge N_1$, we have:
\[\big|\mu_n\big([A]\times[B]\big)\big|<2\eps.\]

We finish the proof by denoting $N=\max\big(N_0,N_1\big)$ and seeing that for every $n\ge N$ we obviously have the same inequality, i.e.:
\[\big|\mu_n\big([A]\times[B]\big)\big|<2\eps.\]
Since $\eps\in\big(0,1/12\big]$ is arbitrary, it holds that $\lim_{n\to\infty}\mu_n\big([A]\times[B]\big)=0$ and hence $\seq{\mu_n}{n\io_+}$ is weak* convergent to $0$.
\end{proof}


\begin{remark}
Let us note that from the above proof we can deduce for every $n\io_+$ the following estimations of the value $\big|\mu_n\big([A]\times[B]\big)\big|$, depending only on the size of the intersection $B\cap\Sigma_n$:
\[
\big|\mu_n\big([A]\times[B]\big)\big|\le
\begin{cases}
\frac{|B\cap\Sigma_n|}{n}&,\text{ if }\big|B\cap\Sigma_n\big|<\frac{2}{\eps^4},\\
\frac{\sqrt{2}}{\eps\sqrt{|B\cap\Sigma_n|}}+\frac{\eps|B\cap\Sigma_n|}{n}&,\text{ if }|B\cap\Sigma_n|\ge\frac{2}{\eps^4}.
\end{cases}
\]
\end{remark}

\medskip

We are in the position to prove the main result of this paper, Theorem \ref{theorem:main}.

\begin{proof}[Proof of Theorem \ref{theorem:main}]
First, notice that the space $\N$ of all non-negative integers, endowed with the discrete topology, is homeomorphic to both $\Omega$ and $\Sigma$, so the \v{C}ech--Stone compactifications $\beta\N$, $\beta\Omega$, and $\beta\Sigma$ are mutually homeomorphic. Consequently, by Proposition \ref{prop:fsjnp_product_omega_sigma}, $\beta\N\times\beta\N$ admits a weak* convergent to $0$ sequence $\seqn{\nu_n}$ of finitely supported normalized signed measures with pairwise disjoint supports, contained completely in $\N\times\N$ (as measures $\mu_n$'s defined above on $\beta\Omega\times\beta\Sigma$ have pairwise disjoint supports contained in $\Omega\times\Sigma$).

Let $D$ and $E$ be discrete countable subsets of $K$ and $L$, respectively. Let $\varphi\colon\N\to D$ and $\psi\colon\N\to E$ be bijections. By the Stone Extension Property of $\beta\N$, there are continuous maps $\Phi\colon\beta\N\to K$ and $\Psi\colon\beta\N\to L$ such that $\Phi\rstr\N=\varphi$ and $\Psi\rstr\N=\psi$. For each $n\io$ define a measure $\rho_n$ on $K\times L$ as follows:
\[\rho_n=\sum_{(x,y)\in\supp(\nu_n)}\nu_n\big(\{(x,y)\}\big)\cdot\delta_{(\varphi(x),\psi(y))},\]
it follows that $\big\|\rho_n\big\|=1$ and $\supp\big(\rho_n\big)$ is finite. Since $\seqn{\nu_n}$ weak* converges to $0$, for every $f\in C(K\times L)$ we have:
\[\lim_{n\to\infty}\int_{K\times L}fd\rho_n=\lim_{n\to\infty}\int_{\beta\N\times\beta\N}f(\Phi,\Psi)d\nu_n=0,\]
where $f(\Phi,\Psi)(x,y)=f(\Phi(x),\Psi(y))\in C(\beta\N\times\beta\N)$, so $\seqn{\rho_n}$ is also weak* convergent to $0$.
\end{proof}




Theorem \ref{theorem:main} and the aforementioned characterization from \cite{BKS19} of those spaces $C_p(X)$ which contain a complemented copy of the space $(c_0)_p$ imply immediately Corollary \ref{cor:main}. It appears that using this corollary and the Closed Graph Theorem one can easily get Cembranos' and Freniche's Theorem \ref{theorem:cemfre}, as shown in the next proposition.

\begin{proposition}\label{prop:c0p_c0}
Let $X$ be a compact space such that $C_p(X)$ contains a
complemented closed linear subspace $E$ isomorphic to $(c_0)_p$. Then, $E$ with the norm topology of $C(X)$ is complemented in $C(X)$ and isomorphic to
the Banach space $c_0$.
\end{proposition}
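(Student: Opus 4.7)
First I would observe that $E$, being closed in $C_p(X)$, is automatically norm-closed in $C(X)$: uniform convergence implies pointwise convergence, so any $\|\cdot\|_\infty$-Cauchy sequence in $E$ has a norm limit that is already a pointwise limit of elements of $E$, hence lies in $E$. Thus $(E,\|\cdot\|_\infty)$ is itself a Banach space, which is a prerequisite for every application of the Closed Graph Theorem below.

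Next I would upgrade the given topological linear isomorphism $T\colon (c_0)_p\to E$ to a Banach-space isomorphism $T\colon c_0\to (E,\|\cdot\|_\infty)$ by the Closed Graph Theorem. Suppose $x^{(k)}\to x$ in the $c_0$-norm and $T(x^{(k)})\to g$ in $\|\cdot\|_\infty$. Norm convergence in $c_0$ forces coordinatewise convergence, hence $x^{(k)}\to x$ in $(c_0)_p$, and the $C_p$-continuity of $T$ gives $T(x^{(k)})\to T(x)$ pointwise on $X$. On the other hand, uniform convergence $T(x^{(k)})\to g$ also implies pointwise convergence on $X$. Uniqueness of pointwise limits yields $g=T(x)$, so $T$ has closed graph and is bounded. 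The identical argument applied to $T^{-1}\colon (E,\|\cdot\|_\infty)\to c_0$ makes $T$ a Banach-space isomorphism.

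For the second claim I would apply the Closed Graph Theorem once more, this time to the continuous projection $P\colon C_p(X)\to E$ witnessing that $E$ is complemented in $C_p(X)$. Viewed as a map $P\colon C(X)\to (E,\|\cdot\|_\infty)$, suppose $f^{(k)}\to f$ uniformly and $P(f^{(k)})\to g$ uniformly. Both convergences take place pointwise on $X$, and the $C_p$-continuity of $P$ forces $P(f^{(k)})\to P(f)$ pointwise, so $g=P(f)$ by uniqueness of pointwise limits. Hence $P$ is norm-to-norm bounded, which exhibits $(E,\|\cdot\|_\infty)$ as a complemented Banach subspace of $C(X)$.

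The whole argument is essentially topological bookkeeping: the only thing to keep straight is which topology governs each convergence, so that the target of each Closed Graph application is indeed Banach and that pointwise limits (being Hausdorff) match across the two finer topologies. I do not expect any nontrivial obstacle; in particular, no estimates on norms of basis vectors or on projection constants are needed, as the Closed Graph Theorem packages all quantitative content.
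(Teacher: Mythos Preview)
Your proof is correct and follows essentially the same route as the paper: both arguments hinge on the Closed Graph Theorem, using the coarser Hausdorff pointwise topology to identify limits and thereby certify that the relevant maps have closed graph. The only cosmetic difference is that the paper obtains complementability by noting that the algebraic complement $F$ is also norm-closed (so $C(X)=E\oplus F$ as Banach spaces via the Open Mapping Theorem), whereas you apply the Closed Graph Theorem directly to the projection $P$; these are equivalent packagings of the same idea.
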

\begin{proof}
Let $F$ be a closed linear subspace of $C_p(X)$ such that $C_p(X)=E\oplus
F$. Then,  since the norm topology of $C(X)$ is finer than the
product topology of $C_p(X)$, the spaces $(E,\|\cdot\|)$ and
$(F,\|\cdot\|)$ (i.e. endowed with the inherited norm topology of
$C(X)$) are still closed in $C(X)$ and hence $C(X)=E\oplus F$. It is
enough now to show that $(E,\|\cdot\|)$ is isomorphic to the Banach
space $c_0$. 

Since $(E,\tau_p)$ (i.e. with the inherited product
topology of $C_p(X)$) is isomorphic to $(c_0)_p$, there is a
topology $\tau$ on $E$ stronger than $\tau_p$ and such that $(E,\tau)$ is isomorphic to $c_0$. The identity operator $T\colon(E,\|\cdot\|)\to(E,\tau)$ has closed graph, so it is continuous, and hence $\tau$ is a Banach
space topology on $E$ smaller than the norm topology of $E$. On the
other hand, the identity operator $S\colon(E,\tau)\to(E,\|\cdot\|)$
has closed graph, too, so it is also continuous, and hence the
topology $\tau$ on $E$ is greater than the norm topology of $E$. It
follows that both topologies are equal, and hence
$(E,\|\cdot\|)$ is isomorphic to the Banach space $c_0$.
\end{proof}





\begin{thebibliography}{10}


\bibitem{BKS19} T. Banakh, J. K\k{a}kol, W. \'Sliwa, {\em Josefson-Nissenzweig property for $C_{p}$-spaces}, RACSAM 113 (2019), 3015--3030.

\bibitem{Bol01} B. Bollob\'as, {\em Random graphs}, 2nd ed., Cambridge Studies in Advanced Mathematics 73, Cambridge University Press, 2001.

\bibitem{Can21} L. Candido, {\em Complementations in $C(K,X)$ and $\ell_\infty(X)$}, preprint (2021), \texttt{arxiv:2104.07152}.

\bibitem{Cem84} P. Cembranos, {\em $C(K, E)$ contains a complemented copy of $c_0$}, Proc. Amer. Math. Soc. 91 (1984), 556--558.

\bibitem{Fre84} F.J. Freniche, {\em Barrelledness of the space of vector valued and simple functions}, Math. Ann. 267 (1984), 479--486.



\bibitem{KM20} J. K\k{a}kol, A. Molt\'o, {\em Witnessing the lack of the Grothendieck property in $C(K)$-spaces via convergent sequences}, RACSAM 114:179 (2020).

\bibitem{Khu78} S.S. Khurana, {\em Grothendieck spaces}, Illinois J. Math. 22 (1978), no. 1, 79--80.

\bibitem{Lin67} J. Lindenstrauss, {\em On complemented subspaces of $m$}, Israel J. Math. 5 (1967), 153--156.


\bibitem{Sch82} W. Schachermayer, {\em On some classical measure-theoretic theorems for non-sigma-complete Boolean algebras}, Rozpr. Mat. (Diss. Math.) 214 (1982), pp. 34.

\end{thebibliography}
\end{document}